\documentclass[11pt]{amsart}

\usepackage{amsfonts,amssymb,enumerate,color,bm,array}
\usepackage{mathrsfs}
\usepackage[colorlinks=true,citecolor=blue,urlcolor=blue,linkcolor=blue,pagebackref]{hyperref}
\usepackage{tikz-cd}
\usepackage[shortlabels]{enumitem}
\usepackage{mathtools}
\usepackage[noabbrev,capitalise]{cleveref}

\hypersetup{
    pdftitle={The tangent bundle of a generalized Kummer fourfold is obstructed},
    pdfauthor={Alessio Bottini}
}

\def\C{\ensuremath{\mathbb{C}}}

\def\cO{\ensuremath{\mathcal O}}

\DeclareMathOperator{\End}{End}
\DeclareMathOperator{\Ext}{Ext}

\DeclareMathOperator{\Id}{Id}
\DeclareMathOperator{\Ker}{Ker}

\DeclareMathOperator{\Sym}{Sym}

\def\isom{\simeq}

\newcommand{\wt}{\widetilde}

\newcommand{\mor}[1][]{\xrightarrow{#1}}
\newcommand{\isomor}{\mor[\sim]}
\def\hra{\hookrightarrow}

\newtheorem{Thm}{Theorem}[section]

\newtheorem{Lem}[Thm]{Lemma}
\newtheorem{Cor}[Thm]{Corollary}

\theoremstyle{definition}

\newtheorem{Rem}[Thm]{Remark}

\allowdisplaybreaks
\setlist[itemize]{noitemsep,nolistsep}
\setlist[enumerate]{noitemsep,nolistsep}

\begin{document}

\title{The tangent bundle of a generalized Kummer fourfold is obstructed}

\author[Alessio Bottini]{Alessio Bottini}

\address{Mathematisches Institut, Universit\"at Bonn, Endenicher Allee 60, 53115 Bonn, Germany}
\email{bottini@math.uni-bonn.de}

\makeatletter
\@namedef{subjclassname@2020}{\textup{2020} Mathematics Subject Classification}
\makeatother
\keywords{Hyper-K\"ahler manifolds, hyperholomorphic bundles, generalized Kummer varieties, deformation theory.}
\subjclass[2020]{14J42, 14D20, 14J60, 32G13.}

\begin{abstract}
For a hyper-K\"ahler fourfold $X$ of generalized Kummer type, we construct an infinitesimal deformation of the tangent bundle $T_X$ with non-zero Yoneda square.
This is the first example of a stable hyperholomorphic bundle with obstructed deformations.
\end{abstract}

\maketitle

\section{Introduction}

Stable hyperholomorphic bundles are among the most natural sheaves on a hyper-K\"ahler manifold.
Their Hermitian--Yang--Mills connections remain holomorphic with respect to every complex structure induced by the hyper-K\"ahler metric, and their deformation theory is governed by a quadratic Kuranishi map.
In fact, Verbitsky proved in \cite[Theorem 6.2]{verbitsky96} that the Kuranishi map agrees with the Yoneda pairing
\[
    \cup \colon \Ext^1(F,F)\times \Ext^1(F,F)\longrightarrow \Ext^2(F,F).
\]
More recently, Meazzini and Onorati deduced this quadraticity result from the stronger property that the DGLA controlling their deformation is formal \cite{meazzini_onorati23}.
This raises the basic question of whether the moduli space of a stable hyperholomorphic bundle can actually be singular.
In this paper, we show that this happens for the tangent bundle on a hyper-K\"ahler fourfold of generalized Kummer type. 

\begin{Thm}
\label{thm:introduction}    
    Let $X$ be a hyper-K\"ahler manifold of $\mathrm{Kum}_2$-type.
    There is a class $\gamma \in \Ext^1(T_X,T_X)$ such that
    \[
        \gamma\cup \gamma\ne 0 \in \Ext^2(T_X,T_X).
    \]
    In particular, the Kuranishi space $\mathrm{Def}(T_X)$ is singular at $[T_X]$.
\end{Thm}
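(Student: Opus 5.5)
We plan to work with the explicit model $X=\mathrm{Kum}_2(A)=K_2(A)$ for an abelian surface $A$. This is enough because the statement is invariant under deformation. The Yoneda square being nonzero for some $\gamma$ is an open condition in flat families, and $T_X$ deforms along with $X$. On a non-empty Zariski-open part of the moduli space we expect $\Ext^i(T_X,T_X)$ to have locally constant dimension. For hyperholomorphic $T_X$ the Yoneda pairing also varies nicely with twistor lines.

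So we first compute $\Ext^\ast(T_X,T_X)=H^\ast(X,\End T_X)$ in low degrees. Since $T_X\isom\Omega_X$ via the symplectic form, we have $\End T_X\isom \Omega_X\otimes T_X$. Also $\End T_X=\cO\oplus \End_0 T_X$, and $\End_0 T_X$ splits further according to the $\mathfrak{sp}$-decomposition $\Sym^2 T_X\oplus \Lambda^2_0 T_X$. The summand $H^1(\Sym^2 T_X)\isom H^1(\Sym^2\Omega_X)$ contains the image of the Atiyah class. We will use the known vanishings $H^1(\cO_X)=0$, $h^{2,0}=1$ and $H^1(T_X)\isom H^{1,1}$ to isolate a concrete subspace of $\Ext^1(T_X,T_X)$.

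The source of $\gamma$ will be the $A\times\hat A$ action, equivalently the $4$-dimensional space $H^1(A,\cO_A)\oplus H^0(A,T_A)$ pulled back along the natural maps. Concretely, $X$ is a fiber of $A^{[3]}\to A$, and $T_{A^{[3]}}|_X$ sits in an extension
\[
0\to T_X\to T_{A^{[3]}}|_X\to \cO_X\otimes T_0A\to 0 .
\]
Deforming the tangent bundle of $A^{[3]}$ via line bundles $L_{\xi}\in\hat A$, or via Fourier--Mukai twists, produces natural classes in $H^1(X,\End T_X)$. An alternative source is the McKay/BKR description $D(X)\simeq D_{\mathfrak S_3}(\mathrm{fiber\ of\ } A^3\to A)$. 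Under this equivalence $T_X$ should correspond to an explicit equivariant object, so $\Ext$-groups and Yoneda products can be computed equivariantly on the abelian fourfold $A^3_0$. There everything reduces to linear algebra in $\Lambda^\ast(H^1(\cO)\oplus H^0(T))$ with $\mathfrak S_3$-invariants.

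Our key step is then this. Choose $\gamma=a\otimes v$, built from a class $a\in H^1(A_0^3,\cO)$ and a vector field or endomorphism component $v$ in the standard representation of $\mathfrak S_3$. The class should be chosen so that $\gamma\cup\gamma=(a\wedge a')\otimes (v\circ v')$, summed over an $\mathfrak S_3$-symmetrization, does not vanish after taking invariants. The point is that $a\cup a$ vanishes only for a pure tensor. After symmetrizing over the standard representation, $\gamma$ becomes a sum such as $\sum_i a_i\otimes e_i$, whose square $\sum_{i,j} a_i a_j\otimes e_ie_j$ survives precisely because $e_ie_j\ne e_je_i$ in $\End$.

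The main obstacle is identifying the object corresponding to $T_X$ under the equivalence, or equivalently computing $H^1(\End T_X)$ concretely enough to write down $\gamma$. We will then need to prove that $\gamma\cup\gamma$ is nonzero in $H^2(\End T_X)$ rather than merely nonzero at cochain level. For the nonvanishing we plan to detect the class by a trace pairing against a suitable element of $\Ext^2$ that lands in $H^4(\cO_X)\isom\C$. This reduces the claim to a nonzero triple or quadruple product $\mathrm{tr}(\gamma\gamma\delta)$, computed via the Atiyah class and integrals of products of $(1,1)$- and $(0,1)$-classes on $X$, using Britze's description of $H^\ast(\mathrm{Kum}_2)$.

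Finally, since the Kuranishi map of the hyperholomorphic bundle $T_X$ is exactly $\gamma\mapsto\gamma\cup\gamma$ by Verbitsky's theorem, the existence of such a $\gamma$ shows that $\mathrm{Def}(T_X)$ is singular at $[T_X]$.
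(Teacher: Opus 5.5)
Your algebraic mechanism is the right one, and it is exactly what drives the paper. A class $\lambda_1\otimes B_1+\lambda_2\otimes B_2$ squares to $(\lambda_1\cup\lambda_2)\otimes[B_1,B_2]$, where $\lambda_1,\lambda_2$ are the components of $u\otimes\rho\subset H^1(P,\cO_P)$ and $B_1,B_2$ are non-commuting endomorphisms indexed by the standard representation. But the proposal has a genuine gap at both ends of this computation.

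First, you never construct $\gamma$ on $X$; you name this as the main obstacle and leave it open. Identifying the McKay image of $T_X$ is not a step you can take cheaply. The line-bundle and translation sources you list give nothing on the fixed $X$: classes from $\mathrm{Pic}^0(A^{[3]})$ restrict to zero because $H^1(X,\cO_X)=0$, and translations by $A$ move $X$ to other fibres of $s$.

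The missing idea is that no identification of $T_X$ is needed. Since $Y$ has quotient singularities, reflexive forms extend over the resolution, so $(q_*\Omega_P^2)^{\mathfrak S_3}\isom\Omega_Y^{[2]}\isom\pi_*\Omega^2_X$. The Leray sequence then gives an injection $H^1(P,\Omega_P^2)^{\mathfrak S_3}\hra H^1(X,\Omega_X^2)$. Composing with the inclusion $\Omega_X^2\hra\End T_X$ induced by $\sigma_X$ gives $\gamma_u$. Note that it lies in the $\Lambda^2$-part of $\End T_X$, not the $\Sym^2$-part you single out. One never knows $\gamma_u$ explicitly near the exceptional divisor. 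One only knows that its restriction to $X^\circ$ pulls back along the \'etale map $q^\circ$ to $\lambda_1\otimes B_1+\lambda_2\otimes B_2$, with $B_i=\Id_W\otimes A_{e_i}$ acting on the trivial bundle $T_{P^\circ}=\cO\otimes(W\otimes\rho)$. That is enough.

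Second, your non-vanishing test does not work as stated. A trace pairing $\mathrm{tr}(\gamma\gamma\delta)$ on the compact $X$ needs $\gamma$ and $\delta$ globally, including over the exceptional locus. Moreover, $X$ has no non-zero $(0,1)$-classes to integrate. The paper instead restricts $\gamma_u\cup\gamma_u$ to $X^\circ$ and pulls back to $P^\circ$; a non-zero restriction certifies a non-zero class. The price is that $\lambda_1\cup\lambda_2$ must be non-zero in $H^2(P^\circ,\cO_{P^\circ})$, not merely in $H^2(P,\cO_P)$. Restriction to the complement of the codimension-two diagonals need not be injective on $H^2$.

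This is where the product $A=E\times F$ enters, which your proposal never uses. For $0\neq u\in H^1(E,\cO_E)$, one restricts to the complete abelian surface $P_E\times\{b\}\subset P^\circ$, on which $\lambda_1,\lambda_2$ restrict to a basis of $H^1(\cO)$.

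Finally, your reduction to a special $X$ needs sharpening. Openness of $\gamma\cup\gamma\neq0$ only propagates to a neighbourhood, and only if $\Ext^1$ and $\Ext^2$ do not jump. What transports the result to every $\mathrm{Kum}_2$-type $X$ is Verbitsky's theorem that the $\Ext$-algebra of a hyperholomorphic bundle is constant along twistor lines. This is combined with connecting $X$ to $K_2(E\times F)$ by a path of twistor lines.
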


This question fits into the broader study of sheaves on hyper-K\"ahler manifolds, an area that has received considerable attention recently; see, for example, \cite{ogrady22,ogrady_kummer24,ogrady_rigid24,ogrady_many_moduli26,ogrady_projective_bundles26,markman24,beckmann23,beckmann25,guoliu25,bottini24,bottini_tenfolds24,maulik_shen_yin_zhang25}.
For recent surveys, we refer to \cite{ogrady_survey26,bottini_macri_stellari26}.
While these developments have brought to the construction of several examples of stable (projectively) hyperholomorphic bundles, all of these examples either have a smooth deformation space, or it is not known whether this is the case. 
In this sense, \cref{thm:introduction} produces the first example of a stable hyperholomorphic bundle with obstructed deformations.  

We briefly describe the construction of the deformation. 
First of all, by results of Verbitsky, the cohomology of hyperholomorphic bundles does not change along twistor lines. 
Therefore, we may specialize $X$ to a chosen variety, and, for later purposes, we choose $X$ to be of the form $K_2(A)$ with $A = E \times F$.

The construction will provide a linear injection
\begin{equation}\label{eq:mainInjection}
    H^1(A,\cO_A)
    \hookrightarrow H^1(X,\Omega^2_X)
    \hookrightarrow H^1(X,\End(T_X)),
    \qquad
    u\longmapsto \gamma_u,
\end{equation}
which exists for any abelian surface $A$; the product assumption will be used to check that $\gamma_u \cup \gamma_u \neq 0.$

The construction itself is quite simple. 
Denoting $P \subset A^3$ the kernel of the summation map, we have a diagram 
\[\begin{tikzcd}
	X && P \\
	& Y
	\arrow["\pi"', from=1-1, to=2-2]
	\arrow["q", from=1-3, to=2-2]
\end{tikzcd}\]
where $q$ is the quotient by $\mathfrak{S}_3$ and $\pi$ is the Hilbert--Chow map. 
We show in \cref{lem:lift} that there is an inclusion $H^1(P,\Omega_P^2)^{\mathfrak{S}_3} \subset H^1(X,\Omega_X^2)$. 
Concretely, the map is given by first restricting to the open $P^{\circ}$ where the $\mathfrak{S}_3$-action is free, then descending to the image $Y^{\circ} = q(P^{\circ})$, and then extending the form to $X$. 
One computes that 
\begin{equation}\label{eq:InvariantPart}
H^1(P,\Omega_P^2)^{\mathfrak{S}_3} = H^1(A,\cO_A) \otimes H^0(A,\Omega_A^2).
\end{equation}
Hence, after choosing a $2$-form on $A$, we get the desired inclusion \eqref{eq:mainInjection}.

It remains to see that the square of the class in the images is non-zero.
The difficulty here consists in giving concrete descriptions for elements in $\Ext^2(T_X,T_X)$. 
To do this, we restrict the square to the open $X^{\circ} = \pi^{-1}(Y^{\circ})$, where we can pull-back to the more familiar $P^{\circ}$: in \cref{lem:square-free} we compute it explicitly.
In fact it will have the form 
\[
    \lambda \otimes B \in \Ext^2(T_{P^{\circ}},T_{P^{\circ}}) = H^2(P^{\circ},\cO_{P^{\circ}}) \otimes \End(H^0(P^{\circ},T_{P^{\circ}})).
\]
The non-vanishing of $B$ will follow from the representation theory of $\mathfrak{S}_3$ while that of $\lambda$ will require the product assumption $A = E \times F$.
This assumption will allow to find a complete surface $S \subset P^{\circ}$, such that $\lambda|_S$ does not vanish.

The behavior of the tangent bundle is quite different for other deformation types.
For instance, the tangent bundle of a manifold of $\mathrm{K3}^{[2]}$-type is rigid by \cite{gavran21}, and it is expected to be rigid for $\mathrm{K3}^{[n]}$-type for all $n$.

\noindent\textbf{AI disclosure.} The main result was obtained with the assistance of OpenAI's GPT-5.6 Sol. The author wrote the paper and takes responsibility for its contents.

\section{Basics on the generalized Kummer}
\label{sec:kummer}

Let $A$ be an abelian surface and consider the summation morphisms
\[
    s:A^{[3]}\longrightarrow A,
    \qquad
    s_0:A^3\longrightarrow A.
\]
The generalized Kummer fourfold is the fiber
\[
    X=K_2(A) \coloneqq s^{-1}(0).
\]
It is a projective irreducible holomorphic symplectic fourfold whenever $A$ is projective, see \cite[Section 7]{beauville83}.
Set
\[
    P=\Ker(s_0),\qquad Y=P/\mathfrak S_3,
\]
where $\mathfrak S_3$ acts by permuting the factors.
The Hilbert--Chow morphism restricts to a symplectic resolution
\[
    \pi:X\longrightarrow Y.
\]

Let $P^\circ\subset P$ be the locus of triples with pairwise distinct entries and put $Y^\circ=P^\circ/\mathfrak S_3$.
The action on $P^\circ$ is free, $Y^\circ$ is the smooth locus of $Y$, and $\pi$ is an isomorphism over $Y^\circ$. We will identify $X^\circ:=\pi^{-1}(Y^\circ)$ with $Y^\circ$ tacitly, and denote by 
\[
    q:P\longrightarrow Y,\qquad q^\circ:P^\circ\longrightarrow X^\circ
\]
the quotient morphisms.

The following Lemma will be the main mechanism to transfer information from $P$, which is easier to control, to $X$.

\begin{Lem}\label{lem:lift}
    For every $p\geq 0$, there are natural isomorphisms
    \[
        \Omega_Y^{[p]}\isomor (q_*\Omega_P^p)^{\mathfrak S_3}
        \qquad\text{and}\qquad
        \pi_*\Omega_X^p\isomor \Omega_Y^{[p]}.
    \]
    In particular, there is a natural injection
    \[
        H^1(P,\Omega_P^2)^{\mathfrak S_3}
        \hra H^1(X,\Omega_X^2).
    \]
    The restriction of the image to $X^\circ$ is the descent of the original class along $q^\circ$.
\end{Lem}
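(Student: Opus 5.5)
The plan is to treat the two isomorphisms separately and then combine them via the Leray spectral sequence.

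\textbf{First isomorphism.} Here $\Omega_Y^{[p]}$ is the reflexive hull $(\Omega_Y^p)^{\vee\vee}$, equivalently $j_*\Omega^p_{Y^\circ}$ for $j\colon Y^\circ\hra Y$. The complement $Y\setminus Y^\circ$ has codimension $2$, so this description is available. On $Y^\circ$ the map $q^\circ$ is étale Galois with group $\mathfrak S_3$, so $(q_*\Omega^p_P)^{\mathfrak S_3}|_{Y^\circ}=\Omega^p_{Y^\circ}$. The sheaf $(q_*\Omega^p_P)^{\mathfrak S_3}$ is reflexive: $q$ is finite, $P$ is smooth, $\Omega^p_P$ is locally free, and invariants form a direct summand of a reflexive sheaf. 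The complement $P\setminus P^\circ$ also has codimension $2$ in $P$. Two reflexive sheaves agreeing outside codimension $2$ coincide, which gives the first isomorphism. This is the standard statement for quotient singularities.

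\textbf{Second isomorphism.} The sheaf $\pi_*\Omega^p_X$ is torsion free and agrees with $\Omega^p_{Y^\circ}$ on $Y^\circ$, so there is a natural injection $\pi_*\Omega^p_X\hra j_*\Omega^p_{Y^\circ}=\Omega^{[p]}_Y$. Surjectivity is the extension problem: every reflexive $p$-form on $Y$ must extend regularly to the resolution $X$. For quotient singularities this is classical. One could invoke Greb--Kebekus--Kovács--Peternell, or argue directly: a form on $Y^\circ$ lifts to an invariant holomorphic form on $P$ by the first isomorphism. Using the factorization $X\leftarrow \wt X\to P$, where $\wt X$ resolves the normalized fiber product, the form pulls back holomorphically to $\wt X$. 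It then descends to $X$, since a form that is holomorphic on a generically finite cover and on the dense open $X^\circ$ has no poles along the exceptional divisor; one checks this by computing pullbacks along the divisor. I expect this extension step to be the main obstacle. I would simply cite the extension theorem for klt (here quotient, even symplectic) singularities.

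\textbf{Cohomology.} From the Leray spectral sequence for $\pi$ and $\Omega^2_X$, the five-term exact sequence gives an injection $H^1(Y,\pi_*\Omega^2_X)\hra H^1(X,\Omega^2_X)$. Since $q$ is finite and we are in characteristic zero, $H^1(Y,(q_*\Omega^2_P)^{\mathfrak S_3})=H^1(P,\Omega^2_P)^{\mathfrak S_3}$. Combining this with the two isomorphisms yields
\[
H^1(P,\Omega^2_P)^{\mathfrak S_3}\isom H^1(Y,\Omega^{[2]}_Y)\isom H^1(Y,\pi_*\Omega^2_X)\hra H^1(X,\Omega^2_X).
\]
Restricting to $Y^\circ$ commutes with all of these maps. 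Over $Y^\circ$ the Leray map is the identity, and $H^1(Y^\circ,(q^\circ_*\Omega^2)^{\mathfrak S_3})=H^1(P^\circ,\Omega^2)^{\mathfrak S_3}$ is étale descent. So the restriction of the image to $X^\circ$ is the descent of the restricted class, which is the final claim.
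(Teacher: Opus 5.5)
Your proposal is correct and follows essentially the same route as the paper. For the first isomorphism you use reflexivity plus agreement on $Y^\circ$, whose complement has codimension two; for the second, an extension theorem; and for the injection and its compatibility with restriction to $X^\circ$, the five-term Leray sequence together with $R^iq_*=0$ and exactness of invariants. The only difference is that you invoke the Greb--Kebekus--Kov\'acs--Peternell extension theorem for klt singularities (or the classical pull-back argument for quotient singularities), whereas the paper cites Kebekus--Schnell's extension theorem for rational singularities; both apply here.
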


\begin{proof}
    The variety $Y$ has quotient singularities, hence rational singularities.
    By \cite[Corollary 1.8]{kebekus21}, every reflexive differential form on $Y$ extends uniquely to $X$, i.e.\
    \[
        \pi_*\Omega_X^p\isom \Omega_Y^{[p]}.
    \]
    Since $q$ is finite, we have $R^iq_*=0$ for $i>0$.
    On $P^\circ$, the action of $\mathfrak S_3$ is free.
    Therefore, $(q_*\Omega_P^p)^{\mathfrak S_3}$ and $\Omega_Y^{[p]}$ agree over $Y^{\circ}$.
    Since both sheaves are reflexive and $Y\setminus Y^{\circ}$ has codimension $2$, this isomorphism extends to $Y$.
    In particular
    \[
        H^1(Y,\Omega_Y^{[2]})
        \isom H^1(P,\Omega_P^2)^{\mathfrak S_3}.
    \]
    Finally, the five-term exact sequence for the Leray spectral sequence of $\pi$ starts with
    \[
        0\longrightarrow H^1(Y,\pi_*\Omega_X^2)
        \longrightarrow H^1(X,\Omega_X^2).
    \]
    This proves the injection.
    Every map in the construction is compatible with restriction to $Y^\circ$, where $\pi$ is an isomorphism, which gives the last assertion.
\end{proof}

\begin{Rem}
A nonzero holomorphic two-form $\omega$ on $A$ induces an $\mathfrak S_3$-invariant symplectic form $\sigma_P$ on $P$. Over $P^\circ$, it descends to $Y^\circ\simeq X^\circ$ and extends to the symplectic form $\sigma_X$ on $X$. We choose the forms compatibly, so that $(q^\circ)^*(\sigma_X|_{X^\circ})=\sigma_P|_{P^\circ}$.

\end{Rem}

\section{Construction of the infinitesimal deformation}
\label{sec:class}
In this section we describe the infinitesimal deformation of $T_X$ that will have non-zero square. 
Set
\[
    W=H^0(A,T_A),\qquad U=H^1(A,\cO_A),
\]
and let
\[
    \rho=\bigl\{(z_1,z_2,z_3)\in \C^3\mid z_1+z_2+z_3=0\bigr\}
\]
be the standard representation of $\mathfrak S_3$.
There are equivariant identifications
\begin{equation}
\label{eq:tangent-P}
    T_P\isom \cO_P\otimes(W\otimes\rho),
    \qquad
    H^1(P,\cO_P)\isom U\otimes\rho.
\end{equation}
If $q_{\rho}$ denotes the restriction of the standard Euclidean pairing on $\C^3$, then the symplectic form $\sigma_P$ is the tensor product of the symplectic form $\omega$ on $W$ and $q_\rho$.

There is a splitting in irreducible representations
\[
\Sym^2 \rho = \C q_{\rho}^{-1} \oplus \rho. 
\]
In particular, there is a unique, up to scaling, $\mathfrak S_3$ equivariant morphism
\[
\star \colon \Sym^2 \rho \longrightarrow \rho.
\]
In coordinates it is given by
\[
(a_1,a_2,a_3) \star (b_1,b_2,b_3) = (a_1b_1,a_2b_2,a_3b_3)
-\frac{q_\rho(a,b)}3(1,1,1).
\]
For any $a \in \rho$ define $A_a \coloneqq a \star - \in \End(\rho)$.
For instance, for the orthonormal basis
    \[
        e_1=\frac1{\sqrt2}(1,-1,0),
        \qquad
        e_2=\frac1{\sqrt6}(1,1,-2),
    \]
one obtains
    \[
        A_{e_1}=\frac1{\sqrt6}
        \begin{pmatrix}0&1\\1&0\end{pmatrix},
        \qquad
        A_{e_2}=\frac1{\sqrt6}
        \begin{pmatrix}1&0\\0&-1\end{pmatrix}.
    \]
The assignment $a \mapsto A_a$ identifies $\rho$ with the self adjoint endomorphisms of $\rho$ of trace zero.

\begin{Rem}
\label{rem:matrices}
    Let $e_1,e_2$ be a $q_\rho$-orthonormal basis of $\rho$.
    Then $A_{e_1}$ and $A_{e_2}$ do not commute.
    Indeed, they are linearly independent because $a\mapsto A_a$ is an isomorphism, whereas two non-zero commuting trace-free endomorphisms of a two-dimensional vector space are proportional.
\end{Rem}

From \eqref{eq:tangent-P}, we write
\[
    H^1(P,\Omega_P^2) = H^1(P,\cO_P) \otimes H^0(P,\Omega^2_P) 
    =U\otimes\rho\otimes\bigwedge^2(W\otimes\rho)^\vee.
\]
For $u\in U$, the self-adjointness of the $A_b$ shows that the formula
\[
    \rho\otimes\rho\longrightarrow H^1(P,\Omega_P^2),
    \qquad
    a\otimes b\longmapsto
    (u\otimes a)\otimes
    \sigma_P\bigl((\Id_W\otimes A_b)\,\cdot,\cdot\bigr)
\]
is well defined, and it defines an $\mathfrak S_3$-equivariant map.
We define $\alpha_u$ to be the image under this map of the coevaluation tensor $q_\rho^{-1}\in\rho\otimes\rho$.
Namely, writing $q_{\rho}^{-1} = e_1 \otimes e_1 + e_2 \otimes e_2$, 
\begin{equation}\label{eq:ClassExplicit}
    \alpha_u=
    \sum_{i=1}^2
    (u\otimes e_i)\otimes
    \sigma_P\bigl((\Id_W\otimes A_{e_i})\,\cdot,\cdot\bigr).
\end{equation}
Since $q_\rho^{-1}$ is $\mathfrak S_3$-invariant, so is $\alpha_u$.
Let $\wt\alpha_u\in H^1(X,\Omega_X^2)$ be the image of $\alpha_u$ under the injection of \cref{lem:lift}.

The symplectic form $\sigma_X$ gives an identification $\Omega_X \simeq T_X$, hence a split injection
\begin{equation}\label{eq:Injection}
    H^1(X,\Omega_X^2) \hookrightarrow H^1(X,\End(T_X)) = \Ext^1(T_X,T_X).
\end{equation}
We define $\gamma_u$ to be the image of $\wt\alpha_u$ under \eqref{eq:Injection}.

\begin{Rem}
    Since
    \[
        \bigwedge^2(W\otimes\rho)^\vee
        \isom
        \bigl(\bigwedge^2W^\vee\otimes\Sym^2\rho\bigr)
        \oplus
        \bigl(\Sym^2W^\vee\otimes\bigwedge^2\rho\bigr),
    \]
    while $\Sym^2\rho\isom\C\oplus\rho$ and $\bigwedge^2\rho$ is the sign representation, we have
    \[
        H^1(P,\Omega_P^2)^{\mathfrak S_3}
        \isom U\otimes\bigwedge^2W^\vee,
    \]
    as claimed in \eqref{eq:InvariantPart}.
    Under this identification, $u\mapsto\alpha_u$ is, up to a nonzero scalar, $u\mapsto u\otimes\omega$.
\end{Rem}

\section{The Yoneda square}
\label{sec:square}

We first compute the square of $\gamma_u$ over the free locus.
For $i=1,2$, set
\[
    \lambda_i=(u\otimes e_i)|_{P^{\circ}}\in H^1(P^{\circ},\cO_{P^{\circ}}),
    \qquad
    A_i=A_{e_i}\in\End(\rho),
    \qquad
    B_i=\Id_W\otimes A_i\in\End(W\otimes\rho).
\]

\begin{Lem}
\label{lem:square-free}
    After restricting $\gamma_u$ to $X^\circ$ and pulling back along $q^\circ$, one has
    \begin{equation}
    \label{eq:square-free}
        (q^\circ)^*\bigl((\gamma_u\cup \gamma_u)|_{X^\circ}\bigr)
        =\lambda_1\cup\lambda_2
        \otimes [B_1,B_2] \in \Ext^2(T_{P^{\circ}},T_{P^{\circ}}),
    \end{equation}
    where we identify
    \[
        \Ext^2(T_{P^\circ},T_{P^\circ})
        =H^2(P^\circ,\End(T_{P^\circ}))
        =H^2(P^\circ,\cO_{P^\circ})\otimes\End(W\otimes\rho).
    \]

\end{Lem}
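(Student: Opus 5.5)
Everything is compatible with restriction and étale pullback. Since $q^\circ$ is étale, the Yoneda product commutes with $(q^\circ)^*$, and $(q^\circ)^*T_{X^\circ}\isom T_{P^\circ}$. So the strategy is to compute $(q^\circ)^*\gamma_u|_{X^\circ}$ as an explicit class in $H^1(P^\circ,\End(T_{P^\circ}))$ and then square it on $P^\circ$.

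\textbf{Step 1: identify the pulled-back class.} By the last assertion of \cref{lem:lift}, $(q^\circ)^*(\wt\alpha_u|_{X^\circ})=\alpha_u|_{P^\circ}$. The map \eqref{eq:Injection} converts a $2$-form $\beta(\cdot,\cdot)$ into the endomorphism $v\mapsto \sigma_X^{-1}(\beta(v,\cdot))$. Since the forms are chosen compatibly, $(q^\circ)^*\sigma_X=\sigma_P$, so this map commutes with pullback. Applied to the summand $\sigma_P((\Id_W\otimes A_{e_i})\cdot,\cdot)$, it returns, up to the sign/transpose convention, the endomorphism $B_i$; here we use that $B_i$ is $\sigma_P$-self-adjoint, which holds because $A_{e_i}$ is $q_\rho$-self-adjoint. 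Since $T_{P^\circ}$ is trivial with fibre $W\otimes\rho$, we get
\[
(q^\circ)^*\gamma_u|_{X^\circ}=\lambda_1\otimes B_1+\lambda_2\otimes B_2\in H^1(P^\circ,\cO_{P^\circ})\otimes\End(W\otimes\rho).
\]

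\textbf{Step 2: expand the square.} Because $T_{P^\circ}$ is trivial, the Yoneda product on $H^\bullet(P^\circ,\End(T_{P^\circ}))$ is the tensor product of cup product on $H^\bullet(P^\circ,\cO)$ with composition in $\End(W\otimes\rho)$, with the Koszul sign. Thus
\[
\Bigl(\sum_i\lambda_i\otimes B_i\Bigr)^2=\sum_{i,j}(\lambda_i\cup\lambda_j)\otimes B_iB_j .
\]
Degree-one classes anticommute, so $\lambda_i\cup\lambda_i=0$ and $\lambda_2\cup\lambda_1=-\lambda_1\cup\lambda_2$. The sum therefore reduces to $\lambda_1\cup\lambda_2\otimes(B_1B_2-B_2B_1)=\lambda_1\cup\lambda_2\otimes[B_1,B_2]$, which is the claimed formula \eqref{eq:square-free}.

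\textbf{Main obstacle.} The only delicate point is the sign and normalization conventions in Step 1. One must check that the transpose/self-adjoint identification really yields $B_i$ up to a single global scalar, and the same scalar for both $i$. A global scalar (including a sign) only rescales $\gamma_u$, which is harmless, and may be absorbed into the normalization of the identification \eqref{eq:Injection}.
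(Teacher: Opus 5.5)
Your proposal is correct and follows essentially the same route as the paper: identify $(q^\circ)^*(\gamma_u|_{X^\circ})=\lambda_1\otimes B_1+\lambda_2\otimes B_2$ via \cref{lem:lift} and the compatibility $(q^\circ)^*\sigma_X=\sigma_P$, then expand the square using anticommutativity of degree-one classes. On the obstacle you flag, the transpose ambiguity disappears because $B_i$ is $\sigma_P$-self-adjoint, so the only possible discrepancy is a single global sign from the convention for $\Omega_X\simeq T_X$; that sign squares to $1$, so the formula holds exactly and not merely up to rescaling.
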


\begin{proof}
    The compatibility of $\sigma_P$ and $\sigma_X$ gives a commutative diagram
    \[
    \begin{tikzcd}[column sep=large,row sep=large]
    H^1(X^\circ,\Omega_{X^\circ}^2)
        \arrow[r,hook]
        \arrow[d,"(q^\circ)^*"']
    &
    H^1(X^\circ,\End(T_{X^\circ}))
        \arrow[d,"(q^\circ)^*"]
    \\
    H^1(P^\circ,\Omega_{P^\circ}^2)
        \arrow[r,hook]
    &
    H^1(P^\circ,\End(T_{P^\circ})).
    \end{tikzcd}
    \]
    Here the horizontal arrows are induced by the respective symplectic forms.
    By the last assertion of \cref{lem:lift}, the left vertical arrow sends
    $\wt\alpha_u|_{X^\circ}$ to $\alpha_u|_{P^\circ}$.
    Expanding $q_\rho^{-1}=e_1\otimes e_1+e_2\otimes e_2$, the lower horizontal arrow sends $\alpha_u|_{P^\circ}$ to $\lambda_1\otimes B_1+\lambda_2\otimes B_2$.
    Since the upper horizontal arrow sends $\wt\alpha_u|_{X^\circ}$ to $\gamma_u|_{X^\circ}$, commutativity gives
    \[
        (q^\circ)^*(\gamma_u|_{X^\circ})
        =\lambda_1\otimes B_1+\lambda_2\otimes B_2
        \quad\text{in}\quad
        H^1(P^\circ,\End(T_{P^\circ})).
    \]
    Since the $\lambda_i$ have cohomological degree one, we obtain
    \begin{align*}
        (\lambda_1\otimes B_1+\lambda_2\otimes B_2)^2
        &=(\lambda_1\cup\lambda_2)\otimes B_1B_2
          +(\lambda_2\cup\lambda_1)\otimes B_2B_1\\
        &=(\lambda_1\cup\lambda_2)\otimes[B_1,B_2],
    \end{align*}
    which is \cref{eq:square-free}.
\end{proof}

Notice that both $\lambda_1\cup\lambda_2$ and $[A_1,A_2]$ transform according to the sign representation of $\mathfrak S_3$.
Their tensor product is therefore invariant, as it must be.
Moreover, $[A_1,A_2]$ is trace-free, so the obstruction in \cref{eq:square-free} has zero trace.

It remains to prove that the class in \eqref{eq:square-free} does not vanish. By \cref{rem:matrices}, the commutator $[A_1,A_2]$ is non-zero, but a priori it could be that $\lambda_1 \cup \lambda_2$ vanishes in $H^2(P^{\circ},\cO_{P^{\circ}})$. 
To show that this is not the case, now specialize to a product abelian surface $A = E \times F$.
There is a product decomposition
\[
    P=P_E\times P_F,
    \qquad
    P_E \coloneqq \Ker(E^3\mor[+]E),
    \quad
    P_F \coloneqq \Ker(F^3\mor[+]F).
\]

\begin{Lem}
\label{lem:nonzero-square}
    Let $0\ne u\in H^1(E,\cO_E)\subset H^1(A,\cO_A)$.
    Then $ \lambda_1 \cup \lambda_2 \ne0 \in H^2(P^{\circ},\cO_{P^{\circ}})$. 
\end{Lem}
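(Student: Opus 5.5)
The plan is to find a complete surface $S\subset P^\circ$ on which $(\lambda_1\cup\lambda_2)|_S\neq 0$ in $H^2(S,\cO_S)$. This suffices because restriction is compatible with cup product, so a nonzero restriction forces $\lambda_1\cup\lambda_2\neq 0$ on $P^\circ$.

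The product structure suggests $S=C_E\times\{f\}$, where $C_E\subset P_E$ is an elliptic curve and $\{f\}$ is a point of $P_F$, but this gives only a curve. We therefore take a two-dimensional subtorus of $P_E$ and keep the triple pairwise distinct by choosing the $F$-coordinates well. Concretely, fix a point $f=(f_1,f_2,f_3)\in P_F$ with pairwise distinct entries. Then every triple $((x_1,f_1),(x_2,f_2),(x_3,f_3))$ with $(x_1,x_2,x_3)\in P_E$ has pairwise distinct entries in $A$. Hence $S\coloneqq P_E\times\{f\}\subset P^\circ$, and $S\isom P_E\isom E^2$ is a complete abelian surface.

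Next we compute the restriction. By \eqref{eq:tangent-P}, $H^1(P,\cO_P)\isom U\otimes\rho$, and under the product decomposition the summand $H^1(E,\cO_E)\otimes\rho$ is exactly $H^1(P_E,\cO_{P_E})$ pulled back via the projection $P\to P_E$. Indeed, $H^1(P_E,\cO)$ is the $\rho$-isotypic piece $H^1(E,\cO_E)\otimes\rho$: it is the quotient of $H^1(E,\cO_E)^{\oplus 3}$ by the diagonal, or dually the sum-zero part. Since $u\in H^1(E,\cO_E)$, the restriction $\lambda_i|_S$ is the class $u\otimes e_i\in H^1(E,\cO_E)\otimes\rho=H^1(S,\cO_S)$. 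For an abelian variety, $H^\bullet(S,\cO_S)=\bigwedge^\bullet H^1(S,\cO_S)$ with cup product equal to the wedge product. Hence
\[
\lambda_1\cup\lambda_2|_S=(u\otimes e_1)\wedge(u\otimes e_2),
\]
and this is nonzero because $u\ne0$ and $e_1,e_2$ are linearly independent, so the two vectors are linearly independent in $H^1(S,\cO_S)$.

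The main obstacle is making the identification $H^1(S,\cO_S)\isom H^1(E,\cO_E)\otimes\rho$ precise and compatible with the identification in \eqref{eq:tangent-P}. Concretely, the restriction map $H^1(P,\cO_P)\to H^1(S,\cO_S)$ must send $u\otimes a$ to $u\otimes a$ and must not kill the $E$-part. To check this, view $P\subset A^3$ and $S\subset E^3\times\{f\}$. The class $u\otimes a$ is represented by the translation-invariant $(0,1)$-form $\sum_j a_j\,\mathrm{pr}_j^*\bar u$. Its restriction to $S$ is again an invariant $(0,1)$-form on the torus $P_E$, and it is nonzero because $a\in\rho$ is nonzero on the sum-zero subspace. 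Working with invariant Dolbeault forms reduces the whole computation to linear algebra, including the cup product, which becomes the wedge product of invariant forms.
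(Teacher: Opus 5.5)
Your proposal is correct and is essentially the paper's proof. Like the paper, you take $S=P_E\times\{f\}\subset P^\circ$ with $f\in P_F$ having pairwise distinct entries, note that the restrictions $u\otimes e_1,\,u\otimes e_2$ form a basis of $H^1(S,\cO_S)\isom H^1(E,\cO_E)\otimes\rho$, and conclude from $H^2(S,\cO_S)=\bigwedge^2H^1(S,\cO_S)$. Your invariant $(0,1)$-form check only makes explicit a compatibility of identifications that the paper leaves implicit, and the detour through an elliptic curve $C_E$ is unnecessary, since $P_E\isom E^2$ is already a surface.
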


\begin{proof}
    Choose a point $ b=(b_1,b_2,b_3)\in P_F $ whose entries are pairwise distinct, and consider the complete abelian surface
    \[
        S \coloneqq P_E\times\{b\}\subset P.
    \]
    Every triple parametrized by $S$ has pairwise distinct entries in $A$, hence $S\subset P^\circ$.

    Under the identification
    \[
        H^1(S,\cO_{S})\isom H^1(E,\cO_E)\otimes\rho,
    \]
    the restrictions of $\lambda_1=u\otimes e_1$ and $\lambda_2=u\otimes e_2$ form a basis of $H^1(S,\cO_{S})$.
    Therefore,
    \[
        (\lambda_1\cup\lambda_2)|_S\ne0\ 
         \in H^2(S,\cO_S) = \wedge^2H^1(S,\cO_S).
    \]
\end{proof}

\begin{Cor}
\label{cor:plane}
    Let $A = E \times F$ be a product abelian surface. 
    Then, the construction $u\mapsto \gamma_u$ gives an injective linear map
    \[
        H^1(A,\cO_A)\hra \Ext^1(T_X,T_X)
    \]
    and every non-zero class in its image has non-zero Yoneda square.
\end{Cor}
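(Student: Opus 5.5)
The plan is to reduce everything to \cref{lem:square-free} and \cref{lem:nonzero-square}. The map $u\mapsto\gamma_u$ is linear because it is a composite of linear maps: $u\mapsto\alpha_u$ is linear by \eqref{eq:ClassExplicit}, and it is followed by the injection of \cref{lem:lift} and the split injection \eqref{eq:Injection}. Injectivity will then follow from the second assertion. Indeed, if $\gamma_u=0$ then $\gamma_u\cup\gamma_u=0$, so it suffices to show that $u\neq0$ forces $\gamma_u\cup\gamma_u\neq0$.

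So fix $0\neq u\in H^1(A,\cO_A)=H^1(E,\cO_E)\oplus H^1(F,\cO_F)$ and write $u=u_E+u_F$. Suppose, for contradiction, that $\gamma_u\cup\gamma_u=0$. Then its restriction to $X^\circ$ and its pullback along $q^\circ$ also vanish. By \cref{lem:square-free} this gives
\[
(\lambda_1\cup\lambda_2)\otimes[B_1,B_2]=0
\quad\text{in}\quad
H^2(P^\circ,\cO_{P^\circ})\otimes\End(W\otimes\rho).
\]
Since $[B_1,B_2]=\Id_W\otimes[A_1,A_2]$ is nonzero by \cref{rem:matrices}, and a pure tensor vanishes only if one factor vanishes, we get $\lambda_1\cup\lambda_2=0$ in $H^2(P^\circ,\cO_{P^\circ})$.

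It remains to contradict this for general $u$, not only for $u\in H^1(E,\cO_E)$ as in \cref{lem:nonzero-square}. This is the one point needing care. Suppose first that $u_E\neq0$. Choose $b\in P_F$ with pairwise distinct entries and restrict to $S=P_E\times\{b\}\subset P^\circ$, as in the proof of \cref{lem:nonzero-square}. The inclusion $S\hookrightarrow P=P_E\times P_F$ composed with the second projection is constant. Hence classes in $H^1(P,\cO_P)$ coming from $H^1(F,\cO_F)\otimes\rho$ restrict to zero on $S$, and
\[
(u\otimes e_i)|_S=(u_E\otimes e_i)|_S .
\]
Therefore $(\lambda_1\cup\lambda_2)|_S$ equals the class computed in \cref{lem:nonzero-square} for $u_E$, which is nonzero. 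If instead $u_E=0$, then $u_F\neq0$, and the same argument applies with $E$ and $F$ exchanged. We take $S'=\{b'\}\times P_F$ with $b'\in P_E$ having distinct entries; every triple on $S'$ then has pairwise distinct entries in $A$, so $S'\subset P^\circ$. In either case we contradict $\lambda_1\cup\lambda_2=0$.

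The only step I expect to need real checking is this vanishing of the cross-factor contributions on $S$. It follows from the Künneth decomposition
\[
H^1(P,\cO_P)=H^1(P_E,\cO_{P_E})\oplus H^1(P_F,\cO_{P_F}),
\]
which is compatible with the identification \eqref{eq:tangent-P}, so that $H^1(E,\cO_E)\otimes\rho$ and $H^1(F,\cO_F)\otimes\rho$ land in the respective summands.
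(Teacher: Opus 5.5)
Your proposal is correct and follows the paper's proof. You restrict to $X^\circ$ and pull back along $q^\circ$, use \cref{lem:square-free} and \cref{rem:matrices} to get $[A_1,A_2]\neq 0$, then restrict to $S=P_E\times\{b\}$ to kill $u_F$ and apply \cref{lem:nonzero-square}, handling the case $u_E=0$ by symmetry. Deducing injectivity from the non-vanishing of $\gamma_u\cup\gamma_u$ is a clean way to cover that claim, which the paper's proof leaves implicit.
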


\begin{proof}
    To show that $\gamma_u \cup \gamma_u \neq 0$ we can restrict to $X^{\circ}$ and pullback via $q^{\circ}$, and show that the result is not zero. 
    By \cref{lem:square-free}, we need to show that 
    \[
    \lambda_1\cup\lambda_2
        \otimes \Id_W\otimes[A_1,A_2] \neq 0.
    \]

    Write $u=u_E+u_F$ according to the decomposition
    \[
        H^1(A,\cO_A)=H^1(E,\cO_E)\oplus H^1(F,\cO_F). 
    \]
    If $u \neq 0$, without losing generality we may assume that $u_E\ne0$.
    Then its restriction to $S=P_E\times\{b\}$ kills $u_F$ and sends $\lambda_i$ to $u_E\otimes e_i$.
    Hence by \cref{lem:nonzero-square} we see that $\lambda_1 \cup \lambda_2 \neq 0 \in H^2(P^{\circ},\cO_{P^{\circ}})$. 
    On the other hand, the commutator $[A_1,A_2]$ does not vanish by \cref{rem:matrices}, which gives the result.
\end{proof}

We can now prove \cref{thm:introduction} from the introduction. 

\begin{proof}[Proof of \cref{thm:introduction}]
Let $X$ be of $\mathrm{Kum}_2$-type. 
By \cite[Theorem~3.1]{verbitsky_cohom96}, we can connect $X$ through a path of twistor lines with $K_2(A)$, with $A = E \times F$. 
Moreover, by \cite[Corollary~8.2]{verbitsky96} the $\Ext^*$ algebra of a hyperholomorphic bundle stays constant along each twistor line. 
The tangent bundle of any hyper-K\"ahler manifold is hyperholomorphic, because $X$ admits Ricci flat metrics by Yau's theorem \cite{yau78}.
In particular, there is a non-canonical isomorphism of graded algebras 
\[
    \Ext^*(T_X,T_X) \cong \Ext^*(T_{K_2(A)},T_{K_2(A)}).
\]
If $\gamma$ is the image of any non-zero class given by \cref{cor:plane}, its square $\gamma \cup \gamma$ is not zero. 
Consequently, $\mathrm{Def}(T_X)$ is not smooth at $[T_X]$.
Since the tangent bundle is stable by \cite{uhlenbeck86}, the same is true for the moduli space $M_{v(T_X)}(X,h)$ parametrizing $h$-polystable deformations of $T_X$, for any polarization $h$. 
\end{proof}

\end{document}